\documentclass[letterpaper, 10 pt, conference]{ieeeconf}  % Comment this line out if you need a4paper
\usepackage{algorithm, algorithmic}
\usepackage{booktabs} % for professional tables
\usepackage{graphicx,graphics,amsthm,amssymb,enumerate,tikz}
\usepackage{mathtools}
\usepackage{arydshln}
\usetikzlibrary{positioning,calc}
\usepackage{fancyhdr}

\theoremstyle{plain}
\newtheorem{theorem}{Theorem}
\newtheorem{proposition}{Proposition}
\newtheorem{lemma}{Lemma}

\theoremstyle{definition}
\newtheorem{definition}{Definition}
\newtheorem{assumption}{Assumption}

\theoremstyle{remark}
\newtheorem{remark}{Remark}

\def\R{\mathbb{R}}
\def\z{\mathbf{z}}

\def\Q{\mathbf{Q}}
\def\P{\mathbf{P}}

\IEEEoverridecommandlockouts                              % This command is only needed if 
\title{\LARGE \bf
Input-to-State Stability Framework for Fully Distributed Primal–Dual Dynamics for Quadratic GNEPs Without Multiplier Consensus
}

\author{Shao-An Yin
\thanks{* This work was performed at the University of Minnesota with support from NSF ECPN Award 2311007. The author is an independent researcher.}%
}\fancypagestyle{firstpage}{
    \fancyhf{}

    \fancyfoot[C]{\footnotesize
\copyright~2026 IEEE. Personal use of this material is permitted.
Permission from IEEE must be obtained for all other uses, in any current or
future media, including reprinting/republishing this material for advertising
or promotional purposes, creating new collective works, for resale or
redistribution to servers or lists, or reuse of any copyrighted component
of this work in other works.}
}

\begin{document}

\maketitle
\thispagestyle{empty}
\pagestyle{empty}
\thispagestyle{firstpage}

%%%%%%%%%%%%%%%%%%%%%%%%%%%%%%%%%%%%%%%%%%%%%%%%%%%%%%%%%%%%%%%%%%%%%%%%%%%%%%%%
\begin{abstract}
Generalized Nash Equilibrium Problems (GNEPs) often arise in multi-agent engineering applications that require distributed algorithms. Unlike traditional approaches that enforce consensus on multipliers, our method removes the need to share multipliers, reducing communication and improving privacy. As a result, different initializations can lead to different GNEs, including non-variational ones. We establish convergence under sufficient conditions using an input-to-state stability (ISS) framework.
\end{abstract}

%%%%%%%%%%%%%%%%%%%%%%%%%%%%%%%%%%%%%%%%%%%%%%%%%%%%%%%%%%%%%%%%%%%%%%%%%%%%%%%%
\section{Introduction}
Generalized Nash Equilibrium Problems (GNEPs) are noncooperative games where agents’ feasible sets are coupled by shared constraints, arising in applications such as power systems, traffic, and communication networks. For example, in traffic routing, each vehicle minimizes travel time subject to shared road capacity \cite{maiorano_dynamics_2000, zhou_generalized_2005, meanfield, MPG, NEURIPS2021_174a61b0, tsaknakis_minimax_2023}, while in resource allocation and power grid management, agents maximize utility under shared capacity limits \cite{contreras_numerical_2004, le_cleach_algames_2020, Shen2023, ma_decentralized_2013}. Since these problems are large-scale and information is often distributed or privacy-sensitive, we focus on scalable distributed solution methods.

In this context, the goal of a distributed algorithm is to compute a Generalized Nash Equilibrium (GNE) without revealing private cost or constraint data. Existing methods require exchanging decision variables and multipliers, which enforces consensus and limits convergence to a variational GNE (v-GNE). In contrast, our approach avoids multiplier exchange, enabling convergence to a broader set of GNEs, including non-variational GNEs, depending on the initialization, while reducing communication and improving privacy. We further demonstrate the method on a robot sensing problem, showing convergence of the fully distributed algorithm.

\subsection{Literature Review}
GNEPs, introduced in \cite{debreu_social_1952, rosen_existence_1965}, model games in which agents' feasible sets depend on others' decisions. Motivated by scalability and privacy concerns, research has increasingly focused on distributed algorithms. Most distributed methods target v-GNEs of strongly monotone games and rely on multiplier consensus \cite{facchinei_generalized_2007, paccagnan_distributed_2016, yu_distributed_2017, yi_pavel_2019, bianchi_continuous-time_2021}. Existing approaches include stochastic robustness analysis \cite{yu_distributed_2017}, operator splitting \cite{yi_pavel_2019, huang_distributed_2021}, and continuous-time dynamics \cite{bianchi_continuous-time_2021}, typically requiring auxiliary variables and additional communication for multiplier consensus.

Unlike existing distributed methods, ours targets a broader class of GNEs. A preliminary version with shared equality constraints and convex local constraints appeared in \cite{yin_acc2026}. This work extends it to linear inequalities, where the induced primal--dual dynamics need not be monotone and the solution set may be disconnected, making the theoretical convergence analysis substantially more challenging. We address this using bounded-input bounded-output and bounded-trajectory analysis, while retaining convergence, under sufficient conditions, to a broader set of GNEs, including non-variational ones, depending on initialization.

%%%%%%%%%%%%%%%%%%%%%%%%%%%%%%%%%%%%%%%%%%%%%%%%%%%%%%%%%%%%%%%%%%%%%%%%%%%%%%%%
\section{Preliminaries}
\subsection{Notation}
Boldface $\mathbf{1}$ and $\mathbf{0}$ denote all-ones and all-zeros vectors of appropriate dimensions. For a vector $a \in \R^q$, $a^\top$ denotes the transpose, $a^\top b$ the inner product with $b \in \R^q$, and $a \le b$ an element-wise comparison. For a matrix $A \in \R^{p \times q}$, $A^\top$ denotes its transpose, and $\bar{\sigma}(A)$ denotes its largest singular value. Following \cite{cherukuri_asymptotic_2016, ebrahimi_robust_2019}, the projection operator for $a, b \in \R$ is given by:
\begin{equation}\label{equ:proj_oper}
\begin{split}
        [a]^+_b &= \begin{cases}
        a, & \text{if }b > 0 \\
        \max \{ 0, a\} & \text{if }b = 0 
    \end{cases}= \begin{cases}
            a, & \text{if }b>0 \, \lor \, a \ge 0\\
            0, & \text{if }b=0, \, a<0
        \end{cases}.
\end{split}
\end{equation}
To simplify the notation, for $a, b \in \mathbb{R}^q$, let $[a]^+_b$ denote a vector whose $i$-th component is $[a_i]^+_{b_i}$. 

\subsection{Problem Formulation}

A Generalized Nash Equilibrium Problem (GNEP) involves $N$ agents. Each agent $v$ controls decision variables $z_v \in \R^{n_v}$. Let $\z \in \R^n$ denote the vector formed by stacking all agents' decision variables, where $n = \sum_{v=1}^N n_v$. We use $\z_{-v}$ to represent the vector of all decision variables except those of agent $v$. Each agent has a cost function $f_v(z_v, \z_{-v}): \R^{n} \to \R$, which depends on its own decision variables $z_v$ as well as those of the other agents $\z_{-v}$. In a GNEP, the feasible decision set of each agent may also depend on the decisions of the other agents. More precisely, each agent $v$ aims to solve the following optimization problem:
\begin{equation}\label{equ:GNEP}
    \begin{split}
        \begin{matrix*}[l]
            \min_{z_v} & f_v(z_v, \z_{-v})\\
            \text{subject to} &  g_{v}(z_v, \z_{-v}) \le  \mathbf{0}_{m_v}\\
        \end{matrix*}
    \end{split}.
\end{equation}
Specifically, the individual cost function $f_v(z_v, \z_{-v})$ is quadratic:
\begin{equation*}
\begin{split}
f_v(z_v, \z_{-v})
= z_v^\top \Q_v z_v
+ z_v^\top \P_v \z_{-v}
+ \mathbf{r}_v^\top z_v,
\end{split}
\end{equation*}
where $\Q_v \in \mathbb{R}^{n_v \times n_v}$, $\P_v \in \mathbb{R}^{n_v \times (n-n_v)}$, and $\mathbf{r}_v \in \mathbb{R}^{n_v}$ are defined for each agent $v$.

In this work, we focus on a class of GNEPs in which the agents are jointly constrained by a global constraint set
\begin{equation*}
\Omega = \left \{ \z \, | \, g(\z) \le \mathbf{0}_p \right \}.
\end{equation*}
However, each agent is not assumed to know the complete global constraint set. Instead, agent $v$ only senses the subset of the global constraints relevant to it, represented by
\begin{equation*}
g_v(z_v, \z_{-v}) \le \mathbf{0}_{m_v}.
\end{equation*}
Thus, $g_v$ denotes the portion of the global constraint $g$ available to agent $v$, rather than an additional set of constraints. The subsets sensed by different agents may overlap, and therefore
\begin{equation*}
    m = \sum_{v=1}^N m_v \ge p.
\end{equation*}
Local constraints are included as a special case when a sensed constraint depends only on $z_v$.

We further assume that the global constraints are linear, so that
\begin{equation*}
g(\z) = A \, \z - \mathbf{b},
\end{equation*}
where $A$ is a matrix of size $p \times n$ and $\mathbf{b}$ is a vector of size $p$. Hence, each row of $A \, \z-\mathbf{b}\le\mathbf{0}_p$
represents one of the global constraints, while each agent only senses the rows relevant to it.

Thus, for each agent $v$, given the decisions of all other agents $\bar{\z}_{-v}$, the constraints sensed by agent $v$ can be written as
\begin{align*}
    g_v(z_v, \bar{\z}_{-v}) = &A(m_v, n_v) \, z_v +  \sum_{j \in [N], j \neq v} A(m_v, n_j) \, \bar{z}_{j}\\ &- \mathbf{b}(m_v),
\end{align*}
where $A(m_v, n_j)$ represents the $m_v$ rows of $A$ sensed by agent $v$ and the $n_j$ columns associated with $z_j \in \R^{n_j}$, and $\mathbf{b}(m_v)$ represents the corresponding $m_v$ entries of $\mathbf{b}$. Therefore, $g_v$ is the subset of the global linear constraints sensed by agent $v$. These constraints may depend on the decisions of multiple agents, while a local constraint appears as a special case when the corresponding constraint depends only on $z_v$.

We define the pseudo-gradient $\nabla F: \R^n \to \R^n$, where the $v$-th block is the gradient of agent $v$'s cost function with respect to its own decision variables, given by
\begin{equation*}
\begin{split}
        &\nabla F(\z) :=  \begin{bmatrix*}
        \nabla_{z_1} f_{1}(z_1, \z_{-1}) \\ \nabla_{z_2} f_{2}(z_2, \z_{-2}) \\ \vdots\\ \nabla_{z_N}f_{N}(z_N, \z_{-N})
    \end{bmatrix*} \\ =  &\begin{bmatrix*}
        (\Q_1 + \Q_1^\top) \, z_1 + \P_1 \, \z_{-1} + \mathbf{r}_1 \\
        (\Q_2 + \Q_2^\top) \, z_2 + \P_2 \, \z_{-2} + \mathbf{r}_2 \\
        \vdots\\
        (\Q_N + \Q_N^\top) \, z_N + \P_N \, \z_{-N} + \mathbf{r}_N \\
    \end{bmatrix*} := F \, \z + \mathbf{r},
\end{split}
\end{equation*}
where $F$ is an $n \times n$ square matrix, $\mathbf{r}$ is a vector of size $n$.
\begin{definition}[Strong Monotone Game]
The GNEP \eqref{equ:GNEP} is strongly monotone if there exists $\delta>0$ such that
\begin{equation*}
    x^\top F x\ge\delta\|x\|^2,\qquad \forall \, x\in\mathbb{R}^n.
\end{equation*}
\end{definition}

\begin{assumption}\label{ass:stronglymonotone}
    The GNEP (\ref{equ:GNEP}) is strongly monotone.
\end{assumption}
Under Assumption~\ref{ass:stronglymonotone} and the nonemptiness of the feasible set, the existence of at least one GNE is guaranteed \cite{glynn_robinson_facchinei_2004}, even though the set of equilibria may be disconnected.

\subsection{Generalized Nash Equilibrium and KKT Conditions}

The solution concept for the GNEP (\ref{equ:GNEP}) is called Generalized Nash Equilibrium (GNE):
\begin{definition}[Generalized Nash Equilibrium (GNE)]\label{def:GNE}
A feasible point $\z^\star$ is a GNE if, for every agent $v$,
\begin{equation*}
    \begin{split}
    &f_v(z_v^\star, \z_{-v}^\star) \le f_v(z_v, \z_{-v}^\star), \\
    &\forall \, z_v \in \left \{z_v \, | \, g_v(z_v, \z_{-v}^\star) \le \mathbf{0}_{m_v}\right \}.
    \end{split}
\end{equation*}
\end{definition}

\begin{assumption}\label{ass:standard}
Two standard assumptions on the game:
    \begin{enumerate}[(i)]
        \item for every player $v$, the function $f_v(z_v, \z_{-v})$ is convex in $z_{v}$, i.e., $(\Q_v+\Q_v^\top) \succeq 0$ for all $v$.
        \item for every player $v$, a suitable constraint qualification and the Linear Independence Constraint Qualification (LICQ) hold; moreover, $\operatorname{rank}(A)=p$.
    \end{enumerate}
\end{assumption}
\begin{definition}[KKT System]\label{ass:KKT}
    Let $\z_{-v}^\star$ be given, and suppose that $z_v^\star$ is a GNE of the GNEP (\ref{equ:GNEP}), then for all $v$, $\lambda_{v}^\star \in \R^{m_v}$ exist such that
    \begin{equation}\label{equ:kkt_gnep}
    \begin{split}
        &L_v(\z,\,  \lambda_{v}): = f_v(z_v, \z_{-v}) + g_{v}(\z)^\top \lambda_{v}\\
        &\nabla_{z_v}  L_v(\z^\star,\,  \lambda_{v}^\star) = \mathbf{0}_{n_v} \\
        &\mathbf{0}_{m_v} \le \lambda_v^\star \, \bot \, g_{v}(z_v^\star, \, {\z}_{-v}^\star)  \le \mathbf{0}_{m_v}\\
    \end{split}.
    \end{equation}
\end{definition}
Under convexity and LICQ, $\z^\star$ is a GNE if and only if there exist KKT multipliers such that the stacked KKT system (\ref{equ:kkt_gnep}) is satisfied \cite[Sec.~4.2]{facchinei_generalized_2010}.

Existing distributed GNE algorithms typically rely on the variational GNE (v-GNE) framework and establish consensus among the players' Lagrange multipliers to ensure convergence to a v-GNE \cite{paccagnan_distributed_2016, yu_distributed_2017, yi_pavel_2019, bianchi_continuous-time_2021, facchinei_generalized_2007}. In contrast, our method does not require multiplier consensus and can converge to a general GNE.

\subsection{Projected Dynamical System}
\begin{definition}[Projected Dynamical System (PDS), \cite{debreu_social_1952}]
    Given $x \in \hat{K}$ and $v \in \R^n$, where $\hat{K} \subseteq \R^n$ is a closed convex set, define the projection of the vector $v$ at $x$ with respect to $\hat{K}$ by  
    \begin{equation*}
        \Pi_{\hat{K}}(x, v) := \lim_{\epsilon \to 0} \frac{P_{\hat{K}}(x + \epsilon \,  v) - x}{\epsilon},
    \end{equation*}
    where 
    \begin{equation*}
        P_{\hat{K}}(x) := \arg \min_{y \in \hat{K}} \| x - y\|.
    \end{equation*}
A projected dynamical system (PDS) is an ordinary differential equation of the form:
    \begin{equation}\label{equ:PDS}
        \dot{x} = \Pi_{\hat{K}}(x, -\hat{F}(x)).
    \end{equation}
\end{definition}
The next proposition establishes trajectory uniqueness.
\begin{proposition}[\cite{zhang_projected_1996} Theorem 2.5]\label{thm:uniq_tra}
    Let $-\hat{F}(x): \R^n \to \R^n$ be Lipschitz on a closed convex polyhedron $\hat{K} \subseteq \R^n$, then for any $x_0 \in \hat{K}$, there exists a unique solution $t \to x(t)$ to the projected dynamical system with $x(0) = x_0$ defined over the domain $[0, \infty)$.
\end{proposition}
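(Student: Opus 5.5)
The approach is the classical one for projected dynamical systems on polyhedra: rewrite the PDS as an equivalent differential inclusion with a maximal monotone right-hand side, then apply the existence and uniqueness theory for such inclusions.

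\textbf{Reformulation.} For $x \in \hat{K}$, the Moreau decomposition gives
\begin{equation*}
\Pi_{\hat{K}}(x,v) = P_{T_{\hat{K}}(x)}(v) = v - P_{N_{\hat{K}}(x)}(v),
\end{equation*}
where $T_{\hat{K}}(x)$ and $N_{\hat{K}}(x)$ are the tangent and normal cones at $x$. Because $\hat{K}$ is a polyhedron, these cones are finitely generated by the active constraints. Moreover, the limit defining $\Pi_{\hat{K}}$ exists: for small $\epsilon$, the map $P_{\hat{K}}(x+\epsilon v)$ is piecewise affine in $\epsilon$ with locally constant active set. From this we get that every absolutely continuous solution of \eqref{equ:PDS} solves the inclusion
\begin{equation*}
\dot x \in -\hat{F}(x) - N_{\hat{K}}(x).
\end{equation*}
Conversely, every solution of this inclusion satisfies $\dot x(t) = \Pi_{\hat{K}}(x(t), -\hat{F}(x(t)))$ for almost every $t$. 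This is the lazy (minimal-norm) selection property of monotone inclusions.

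\textbf{Existence.} Write $-\hat{F} = -\hat{F} + Lx - Lx$, where $L$ is the Lipschitz constant. Then the set-valued operator $x \mapsto \hat{F}(x) + Lx + N_{\hat{K}}(x)$ is maximal monotone, since it is a Lipschitz monotone map plus the subdifferential of the indicator of $\hat{K}$. The inclusion is therefore a Lipschitz perturbation of a maximal monotone inclusion. The Brézis / Kato theory of such inclusions then gives, for each $x_0 \in \hat{K} = \overline{\operatorname{dom} N_{\hat{K}}}$, an absolutely continuous solution on $[0,T]$ for every $T$. An alternative route is to build Euler polygonal approximations $x_{k+1} = P_{\hat{K}}(x_k - h\hat{F}(x_k))$ and pass to the limit via Arzelà--Ascoli. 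Global existence on $[0,\infty)$ follows because Lipschitz continuity gives linear growth, $\|\hat{F}(x)\| \le \|\hat{F}(x_0)\| + L\|x-x_0\|$. Combined with $\|\Pi_{\hat{K}}(x,v)\| \le \|v\|$, Grönwall's inequality bounds $\|x(t)\|$ on finite intervals, so there is no finite-time escape.

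\textbf{Uniqueness.} Take two solutions $x,y$ with the same initial condition and differentiate $\tfrac12\|x-y\|^2$. This gives
\begin{equation*}
(x-y)^\top(\dot x-\dot y) = -(x-y)^\top(\hat{F}(x)-\hat{F}(y)) - (x-y)^\top(n_x-n_y),
\end{equation*}
with $n_x \in N_{\hat{K}}(x)$ and $n_y \in N_{\hat{K}}(y)$. Monotonicity of the normal cone gives $(x-y)^\top(n_x-n_y) \ge 0$, so
\begin{equation*}
\tfrac{d}{dt}\tfrac12\|x-y\|^2 \le L\|x-y\|^2.
\end{equation*}
Grönwall's inequality then yields $x \equiv y$.

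\textbf{Main obstacle.} The delicate step is the reformulation: showing rigorously that the directional projection $\Pi_{\hat{K}}$ coincides with $P_{T_{\hat{K}}(x)}$, and that the solution concepts agree almost everywhere. The polyhedral assumption is what makes this work cleanly, since it gives exactness of the tangent cone approximation: $P_{\hat{K}}(x+\epsilon v) = x + P_{T_{\hat{K}}(x)}(\epsilon v)$ for small $\epsilon$. Here I would cite the standard result of \cite{zhang_projected_1996} rather than reprove it.
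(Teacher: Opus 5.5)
Your argument is correct, but it takes a different route from the one behind the statement. The paper gives no proof; it imports the result from \cite{zhang_projected_1996}. There, following Dupuis and Nagurney, the projected dynamics on a convex polyhedron is recast as a Skorokhod problem, and existence and uniqueness are read off from Lipschitz continuity of the associated Skorokhod map; that is where the polyhedral hypothesis enters. You instead pass to the inclusion $\dot x\in-\hat F(x)-N_{\hat K}(x)$, get existence from maximal monotone (Br\'ezis) theory, and get uniqueness from monotonicity of $N_{\hat K}$ plus Gr\"onwall.

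Your route buys generality and economy, because polyhedrality is not actually needed anywhere in it. The identity $\Pi_{\hat K}(x,v)=P_{T_{\hat K}(x)}(v)$ holds for every closed convex set (Zarantonello). The converse equivalence of solution concepts is also elementary. At a point of differentiability, $x(s)\in\hat K$ on both sides of $t$ forces $\pm\dot x(t)\in T_{\hat K}(x(t))$. Hence $\dot x(t)\perp n$ for all $n\in N_{\hat K}(x(t))$, and uniqueness in Moreau's decomposition gives $\dot x(t)=P_{T_{\hat K}(x(t))}(-\hat F(x(t)))$. So your proof covers arbitrary closed convex $\hat K$ and yields $\|x(t)-y(t)\|\le e^{Lt}\|x_0-y_0\|$ at no extra cost. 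The Skorokhod route is tied to polyhedra, but it gives Lipschitz dependence on the entire driving path rather than only on the initial state, which is convenient for approximation and perturbation arguments.

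Two minor remarks. First, $\hat F$ is only Lipschitz on $\hat K$, so $\hat F+LI$ is monotone only on $\hat K$. That suffices, since $\hat K$ is the domain of your operator; alternatively, apply the Lipschitz-perturbation theorem directly with $A=N_{\hat K}$ and $B=\hat F$, with no shift. Second, your closing plan to cite \cite{zhang_projected_1996} for the reformulation leans on the very source whose theorem you are proving. It is also unnecessary, because your sketch already supplies that step.
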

Here, the domain $[0,\infty)$ ensures that the trajectory is well defined for arbitrarily large times, allowing asymptotic analysis as $t\to\infty$.

%%%%%%%%%%%%%%%%%%%%%%%%%%%%%%%%%%%%%%%%%%%%%%%%%%%%%%%%%%%%%%%%%%%%%%%%%%%%%%%%

\section{A Fully Distributed Algorithm}
Algorithm~\ref{alg:main}, the contribution of our work, is a fully distributed method that eliminates the need for multiplier consensus. This allows each agent to keep its individual cost function and associated shared constraints private, thereby reducing information sharing and improving privacy.
\begin{algorithm}[H]
\begin{algorithmic}[1]
\STATE {\bfseries Input:} Initial $z_v(0)$, $\lambda_{v}(0) \ge 0$.
\STATE $\dot{z}_v = -\nabla_{z_v} L_v(\z, \, \lambda_v)$.
\STATE $\dot{\lambda}_{v} = [g_v(z_v, \z_{-v})]^+_{\lambda_{v}}$.
\end{algorithmic}
\caption{Distributed dynamics of each agent $v$}
\label{alg:main}	
\end{algorithm}
In Lines~2–3 of Algorithm~\ref{alg:main}, each agent may need to share its decision vector with all other agents in the worst-case scenario.

The proposed method exchanges only decision variables, while keeping each agent's cost function, multipliers, and constraints private. Decision variables are communicated over the cost-function and shared-constraint connectivity graphs, whereas existing v-GNE methods (e.g., \cite{yi_pavel_2019, cenedese_asynchronous_2021, huang_distributed_2021, bianchi_continuous-time_2021}) additionally exchange Lagrange multipliers and auxiliary variables.

To quantify the savings, let $\mathcal{G}_f=(\mathcal{V},\mathcal{E}_f)$ and $\mathcal{G}_g=(\mathcal{V},\mathcal{E}_g)$ denote the cost-function and shared-constraint graphs, respectively, and assume $\mathcal{G}_f=\mathcal{G}_g$, with each exchanged vector in $\mathbb{R}^d$. Our method requires $|\mathcal{E}_f|d$ communicated entries per iteration, whereas consensus-based methods require $3|\mathcal{E}_f|d$ when accounting for decision, multiplier, and auxiliary-variable exchange. Thus, under this communication model, our method reduces the per-iteration communication cost to one-third.

Lemma~\ref{lem:alg_pds} and Proposition~\ref{thm:uniq_tra} guarantee that the trajectories are well defined for all $t\ge0$ and that the equilibria correspond to KKT points of the game, providing the basis for the asymptotic analysis.
\begin{lemma}\label{lem:alg_pds}
Algorithm~\ref{alg:main} implements the PDS in \eqref{equ:PDS} by stacking the dynamics of all agents. Its trajectory is unique and well-defined for all time, and every equilibrium corresponds to a KKT point of \eqref{equ:kkt_gnep}.
\end{lemma}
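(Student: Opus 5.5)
I will stack the agents' states as $x=(\z,\lambda)$ with $\lambda=(\lambda_1,\dots,\lambda_N)\in\R^m$ and take the state space $\hat K=\R^n\times\R^m_{\ge 0}$. This set is a closed convex polyhedron. The plan has three steps: show that Algorithm~\ref{alg:main} is the PDS \eqref{equ:PDS} on $\hat K$ for an affine vector field, invoke Proposition~\ref{thm:uniq_tra}, and then read off the equilibrium conditions.

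\textbf{Step 1: the stacked vector field.} Write $A_v:=A(m_v,\cdot)\in\R^{m_v\times n}$ for the rows sensed by agent $v$, so that $g_v(\z)=A_v\z-\mathbf{b}(m_v)$. Then
\begin{equation*}
\nabla_{z_v}L_v(\z,\lambda_v)=\nabla_{z_v}f_v+A(m_v,n_v)^\top\lambda_v .
\end{equation*}
Stacking over $v$ gives
\begin{equation*}
\hat F(\z,\lambda)=\begin{bmatrix} F\z+\mathbf{r}+B\lambda\\ -(\tilde A\z-\tilde{\mathbf b})\end{bmatrix}.
\end{equation*}
Here $B=\mathrm{blkdiag}_v\big(A(m_v,n_v)^\top\big)$ is $n\times m$, and $\tilde A,\tilde{\mathbf b}$ stack the $A_v$ and the $\mathbf b(m_v)$. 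This $\hat F$ is affine, hence globally Lipschitz with constant at most the largest singular value of its matrix.

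\textbf{Step 2: matching the projection.} I must check that $\Pi_{\hat K}(x,-\hat F(x))$ reproduces Lines~2--3. Since $P_{\hat K}$ acts componentwise, being the identity on $\z$ and $\max\{0,\cdot\}$ on each $\lambda_i$, the directional limit can be computed coordinate by coordinate.
\begin{itemize}
\item For a free coordinate, and for $\lambda_i>0$, small $\epsilon$ leaves the point in the interior, so the limit equals the raw velocity.
\item For $\lambda_i=0$, one gets $\lim_{\epsilon\to 0}\max\{0,\epsilon a\}/\epsilon=\max\{0,a\}$ with $a=g_{v,i}$.
\end{itemize}
This is exactly $[a]^+_{b}$ from \eqref{equ:proj_oper}. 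This coordinatewise computation is the only delicate point, and it is elementary because $\hat K$ is a product of half-lines and lines.

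\textbf{Step 3: well-posedness.} The initialization $\lambda_v(0)\ge 0$ gives $x_0\in\hat K$. Proposition~\ref{thm:uniq_tra} then yields a unique solution on $[0,\infty)$.

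\textbf{Step 4: equilibria are KKT points.} At an equilibrium, $\dot z_v=0$ gives $\nabla_{z_v}L_v(\z^\star,\lambda_v^\star)=\mathbf 0$. For the multipliers, take each component $i$ of $\lambda_v$.
\begin{itemize}
\item If $\lambda_{v,i}^\star>0$, then $\dot\lambda_{v,i}=0$ forces $g_{v,i}=0$.
\item If $\lambda_{v,i}^\star=0$, then $\max\{0,g_{v,i}\}=0$ forces $g_{v,i}\le 0$.
\end{itemize}
Together with $\lambda^\star\ge 0$, which holds by invariance of $\hat K$, this gives $\mathbf 0\le\lambda_v^\star\perp g_v(\z^\star)\le\mathbf 0$, i.e., \eqref{equ:kkt_gnep} for every $v$. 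Conversely, any KKT point makes every component of the vector field vanish, so it is an equilibrium.

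By the KKT equivalence under Assumption~\ref{ass:standard} cited after Definition~\ref{ass:KKT}, the $\z$-part of an equilibrium is a GNE. I expect no real obstacle; the main care is the boundary case of the projection in Step 2.
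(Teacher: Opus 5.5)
Your proof is correct and follows the route the paper has in mind; the paper states this lemma without a written proof and only points to Proposition~\ref{thm:uniq_tra} for well-posedness, so your argument supplies the missing details. Those details are exactly what is needed: the stacked dynamics as a PDS on the closed convex polyhedron $\R^n\times\R^m_{\ge 0}$ with an affine (hence Lipschitz) field, the coordinatewise identification of $\Pi_{\hat K}$ with $[\cdot]^+_{\lambda}$, Proposition~\ref{thm:uniq_tra} for existence and uniqueness, and the componentwise check that equilibria coincide with solutions of \eqref{equ:kkt_gnep}.
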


\subsection{Convergence of the Difference Vector}
For each shared constraint $g^i(\z)$, let $\lambda^i(t)$ be the trajectory with the smallest initial value among $\{\lambda_v^i(t)\}$, chosen as the reference state. There are $p$ such reference states, one per shared constraint. For any other $\lambda_u^i(t)$ under the same constraint, the difference from its reference evolves as:
\begin{equation}\label{equ:delta_inequ}
\begin{split}
    \dot{\Delta}_{u}^i &:= \dot{\lambda}_u^i - \dot{\lambda}^i = [g^i(\z)]_{{\lambda}_u^i}^+ - [g^i(\z)]_{{\lambda}^i}^+ \\
    &=\begin{cases}
    0 & \text{if }\left (\Delta^i_u \ge 0, \lambda^i > 0 \right) \, \lor \, \left (g^i(\z) \ge 0 \right)\\
    g^i(\z) & \text{if }\Delta^i_u > 0,\,  \lambda^i = 0,\, g^i(\z) < 0\\
    0  & \text{if }\Delta^i_u = 0,\,  \lambda^i = 0,\, g^i(\z) < 0
\end{cases}.
\end{split}
\end{equation}

Thus, each $\Delta_u^i(t)$ is nonincreasing and nonnegative. If agent $u$ is the reference for constraint $i$, then $\Delta_u^i(t)\equiv0$. Let $\Delta_{[v]}\in\mathbb{R}^{m_v}_{\ge0}$ collect the multiplier differences associated with agent $v$, and define $m:=\sum_{v=1}^N m_v$.

Combining the dynamics of all agents yields
\begin{equation}\label{equ:whole_system}
\begin{split}
\dot{\z} &= -F\z-\mathbf{r}-A^\top\lambda-D\Delta\\
\dot{\lambda} &= [A\z-\mathbf{b}]_\lambda^+ \\
\dot{\Delta} &\text{ is given componentwise by \eqref{equ:delta_inequ}}
\end{split},
\end{equation}
where
\begin{equation*}
\begin{split}
    & \lambda=\begin{bmatrix}
    \lambda^1 \\ \lambda^2\\ \vdots \\ \lambda^p
\end{bmatrix}, D = \text{diag} \left ( \begin{bmatrix}
            A(m_1, n_1)^\top\\
            A(m_2, n_2)^\top\\
             \vdots \\
            A(m_N, n_N)^\top \\
        \end{bmatrix}\right), 
    \Delta = 
        \begin{bmatrix}
            \Delta_{[1]}\\
            \Delta_{[2]}\\
             \vdots \\
            \Delta_{[N]}\\
        \end{bmatrix}.
\end{split}
\end{equation*}
We first consider the special case $\Delta(0)=\mathbf{0}$, where all copies of each shared-constraint multiplier are initialized identically.
\begin{lemma} \label{lem:delta0} If $\Delta(0) = \mathbf{0}$, the system will converge to an equilibrium point, which is the v-GNE for the GNEP (\ref{equ:GNEP}).
\end{lemma}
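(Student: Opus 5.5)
The first step is to show that $\Delta(0)=\mathbf{0}$ forces $\Delta(t)\equiv\mathbf{0}$ for all $t\ge 0$. By \eqref{equ:delta_inequ}, whenever $\Delta^i_u=0$ the derivative $\dot\Delta^i_u$ equals $0$ in every case: the first and third branches apply, and the second requires $\Delta^i_u>0$. We also know each $\Delta_u^i$ is nonnegative and nonincreasing. Since trajectories are unique by Lemma~\ref{lem:alg_pds} and Proposition~\ref{thm:uniq_tra}, the trajectory with $\Delta\equiv\mathbf{0}$ is the solution. Equivalently, every copy $\lambda_v^i$ coincides with the reference $\lambda^i$ for all time. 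The stacked system \eqref{equ:whole_system} then reduces to the centralized projected primal--dual dynamics
\[
\dot{\z}=-F\z-\mathbf{r}-A^\top\lambda,\qquad \dot\lambda=[A\z-\mathbf{b}]^+_\lambda ,
\]
which are those of the variational inequality $\mathrm{VI}(\Omega,F\z+\mathbf{r})$ with one multiplier per shared constraint.

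Next I will establish convergence of this reduced system with a Lyapunov argument in the style of \cite{cherukuri_asymptotic_2016}. Take any equilibrium $(\z^\star,\lambda^\star)$; one exists because a v-GNE exists under Assumption~\ref{ass:stronglymonotone} and LICQ guarantees its multipliers. Define
\[
V=\tfrac12\|\z-\z^\star\|^2+\tfrac12\|\lambda-\lambda^\star\|^2 .
\]
Differentiating along trajectories gives a $\z$-part equal to $-(\z-\z^\star)^\top F(\z-\z^\star)-(\z-\z^\star)^\top A^\top(\lambda-\lambda^\star)$. For the $\lambda$-part, the projection satisfies $(\lambda-\lambda^\star)^\top[A\z-\mathbf{b}]^+_\lambda\le(\lambda-\lambda^\star)^\top(A\z-\mathbf{b})$: components with $\lambda_i=0$ and $(A\z-\mathbf{b})_i<0$ contribute $0$ on the left and $-\lambda_i^\star(A\z-\mathbf{b})_i\ge0$ on the right. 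Combining this with complementarity $(\lambda^\star)^\top(A\z^\star-\mathbf{b})=0$ and $\lambda^\top(A\z^\star-\mathbf{b})\le0$, the cross terms cancel and
\[
\dot V\le-\delta\|\z-\z^\star\|^2
\]
by Assumption~\ref{ass:stronglymonotone}.

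This makes trajectories bounded, so I can apply the invariance principle for projected (discontinuous) dynamics. The version for Carathéodory solutions in \cite{cherukuri_asymptotic_2016} applies because the map is Lipschitz on the polyhedron and solutions are unique. On the largest invariant set inside $\{\dot V=0\}$ we have $\z\equiv\z^\star$, hence $\dot\z=0$, which gives $A^\top\lambda=-F\z^\star-\mathbf{r}$ constant. Since $\operatorname{rank}(A)=p$, $A^\top$ is injective, so $\lambda$ is constant. Constancy of $\lambda$ then forces $[A\z^\star-\mathbf{b}]^+_\lambda=0$, so the limit set consists of equilibria. Because $V$ is nonincreasing with respect to each equilibrium, the usual argument gives convergence to a single point: some $\omega$-limit point serves as $(\z^\star,\lambda^\star)$, and $V$ with respect to it tends to $0$.

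Finally, by Lemma~\ref{lem:alg_pds} the limit is a KKT point of \eqref{equ:kkt_gnep} in which all agents carry the same multiplier $\lambda^\star$ on each shared constraint. This is exactly the KKT system of $\mathrm{VI}(\Omega,F\z+\mathbf{r})$, so the limit is the v-GNE, which is unique by strong monotonicity. The step I expect to be the main obstacle is the invariance and single-point convergence argument for the discontinuous projected dynamics, specifically justifying LaSalle's principle and pinning down $\lambda$. The first step is easy, and the Lyapunov inequality is standard.
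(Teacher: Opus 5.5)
Your proposal is correct and follows essentially the same route as the paper's proof sketch. In both, $\Delta(0)=\mathbf{0}$ forces $\Delta\equiv\mathbf{0}$, the dynamics reduce to the consensus projected primal--dual system, $V=\tfrac12\|x-x^\star\|^2$ is nonincreasing, and LaSalle's principle with strong monotonicity of $F$ and full row rank of $A$ gives convergence to the unique KKT point with consensus multipliers, i.e., the v-GNE. You supply more detail than the sketch (the explicit projection inequality giving $\dot V\le-\delta\|\z-\z^\star\|^2$), and injectivity of $A^\top$ in fact yields $\lambda=\lambda^\star$ directly on the invariant set, so your separate single-point convergence step is unnecessary.
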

\begin{proof}[Proof sketch]
If $\Delta(0)=\mathbf{0}$, then \eqref{equ:delta_inequ} implies $\Delta(t)\equiv\mathbf{0}$ for all $t\ge0$, so Algorithm~\ref{alg:main} reduces to the standard projected primal--dual dynamics with consensus multipliers. The corresponding primal--dual operator is monotone since $F$ is strongly monotone and the cross terms cancel. Let $x=(\z,\lambda)$ and $x^\star=(\z^\star,\lambda^\star)$. Using the Lyapunov function $V(x)=\frac12\|x-x^\star\|^2$, standard projected-dynamical-system arguments yield $\dot V\le0$. LaSalle's invariance principle, together with the strong monotonicity of $F$ and the full row rank of $A$, then implies convergence to the unique equilibrium $x^\star$. Since this equilibrium satisfies the KKT conditions with consensus multipliers, it corresponds to the v-GNE of \eqref{equ:GNEP}.
\end{proof}

\begin{lemma}\label{lem:det_const}
    For every initial point  $y_0 = (\z(0),\, \lambda(0), \, \Delta(0))$, there exists a $\bar{\Delta}_{[y_0]} \ge \mathbf{0}$ such that
    \begin{equation*}
        \lim_{t \to \infty} \Delta^i (t) = \bar{\Delta}^i_{[y_0]}.
    \end{equation*}
\end{lemma}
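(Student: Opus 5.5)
The plan is to use monotone convergence for each component separately. By Lemma~\ref{lem:alg_pds} and Proposition~\ref{thm:uniq_tra}, the stacked trajectory starting from $y_0$ exists and is unique on $[0,\infty)$. Each component $\Delta_u^i(t)$ is therefore a well-defined, absolutely continuous function of time. So it suffices to show that every component is nonincreasing and bounded below by $0$. The limit $\bar{\Delta}^i_{[y_0]}$ then exists, and it depends on $y_0$ only through the unique trajectory.

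\textbf{Nonnegativity.} By construction, the reference $\lambda^i$ has the smallest initial value among the copies $\{\lambda_v^i\}$, so $\Delta_u^i(0)\ge 0$. For invariance of $\Delta_u^i\ge 0$, I will check the boundary $\Delta_u^i=0$. There $\lambda_u^i=\lambda^i$, and the two copies evolve under the same scalar law $\dot\lambda=[g^i(\z)]^+_{\lambda}$ driven by the same signal $g^i(\z(t))$. Hence $\dot\Delta_u^i=0$; this is the first and third cases of \eqref{equ:delta_inequ}. A cleaner route is a comparison argument for the scalar projected ODE. Both copies are the unique Skorokhod-type solutions of $\dot\lambda = g^i(\z(t))$ reflected at $0$. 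Reflection at $0$ is order preserving in the initial condition, so $\lambda_u^i(t)\ge\lambda^i(t)$ for all $t$.

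\textbf{Monotonicity.} I will go through the cases of \eqref{equ:delta_inequ} with $\Delta_u^i\ge 0$.
\begin{itemize}
\item If $\lambda^i>0$, or if $g^i(\z)\ge 0$, both projections are inactive or equal, and $\dot\Delta_u^i=0$.
\item If $\lambda^i=0$ and $g^i(\z)<0$, then $\dot\Delta_u^i = g^i(\z)<0$ when $\Delta_u^i>0$, and $\dot\Delta_u^i=0$ when $\Delta_u^i=0$.
\end{itemize}
Thus $\dot\Delta_u^i\le 0$ for almost every $t$. Since $\Delta_u^i$ is absolutely continuous, it is nonincreasing. Combined with the lower bound $0$, the monotone convergence theorem gives $\lim_{t\to\infty}\Delta_u^i(t)=\bar{\Delta}^i_{u,[y_0]}\in[0,\Delta_u^i(0)]$. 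For reference agents, $\Delta\equiv 0$ and the limit is trivially $0$. Stacking these limits gives the vector $\bar{\Delta}_{[y_0]}\ge\mathbf{0}$.

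\textbf{Main obstacle.} The delicate point is rigor at the switching set where $\lambda^i=0$ and $g^i(\z)$ changes sign. There the projection operator is discontinuous, and the case formula \eqref{equ:delta_inequ} only holds almost everywhere. I will handle this by working with the absolutely continuous solution from Proposition~\ref{thm:uniq_tra} and applying the sign analysis at differentiability points only. This suffices because an absolutely continuous function with a.e.\ nonpositive derivative is nonincreasing. Notably, no boundedness of $\z(t)$ is needed for this lemma; only existence of the trajectory on $[0,\infty)$ is used.
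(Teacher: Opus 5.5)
Your proposal is correct and takes the same route as the paper: the case analysis in \eqref{equ:delta_inequ} shows that each $\Delta_u^i$ is nonnegative and nonincreasing, so monotone convergence gives a limit $\bar{\Delta}^i_{[y_0]}\ge 0$. Your extra steps (absolute continuity, sign analysis only at differentiability points, and the order-preserving Skorokhod comparison for invariance of $\Delta_u^i\ge 0$) fill in details that the paper's sketch leaves implicit.
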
 
\begin{proof}[Proof sketch]
By \eqref{equ:delta_inequ}, each component $\Delta^i(t)$ is nonincreasing and nonnegative. Hence, $\Delta^i(t)$ is bounded below and therefore converges to some limit $\bar{\Delta}^i_{[y_0]}\ge0$.
\end{proof}

Thus, $\Delta(t)\to\bar{\Delta}_{[y_0]}$, where the limit depends on the initial condition $y_0$. Moreover, by \eqref{equ:delta_inequ}, any component $\Delta^i(t)$ that reaches zero remains zero thereafter.

\subsection{Input-to-State Stability with Respect to Active KKT Points}

We now analyze the primal--dual dynamics with respect to a fixed value $\hat{\Delta}$. For a given $\hat{\Delta}\ge0$, we consider a KKT point at which the shared constraints are active, and denote the corresponding primal--dual variables by $\z(\hat{\Delta})$ and $\lambda(\hat{\Delta})$. Not every $\hat{\Delta}$ necessarily admits such an active KKT point. We therefore define the set of admissible shifts as follows:
\begin{definition}[Admissible active-KKT shifts]
\label{def:Dact}
Define the set $\mathcal{D}_{\mathrm{act}} \subseteq \mathbb{R}^m_{\ge 0}$ as the collection of all
$\hat{\Delta}\ge 0$ for which there exists a pair $(\z(\hat{\Delta}),\lambda(\hat{\Delta}))$
satisfying
\begin{equation}\label{eq:active_kkt_map}
\begin{split}
F \z(\hat{\Delta}) + \mathbf{r} + A^\top \lambda(\hat{\Delta}) + D \hat{\Delta} &= 0,\\
A \z(\hat{\Delta}) - \mathbf{b} &= 0,\\
\lambda(\hat{\Delta}) &\ge 0 .
\end{split}
\end{equation}
\end{definition}

\begin{assumption}[Existence of active-KKT points]
\label{ass:exist_delta}
There exists at least one $\hat{\Delta}\in\mathcal{D}_{\mathrm{act}}$ for which an
associated active-constraint KKT point exists.
\end{assumption}

Consider the error dynamics with respect to the active KKT point in \eqref{eq:active_kkt_map} with
$\tilde{\z} = \z - \z(\hat{\Delta})$ and
$\tilde{\lambda} = \lambda - \lambda(\hat{\Delta})$.
Then
\begin{equation}\label{equ:error}
\begin{split}
    \dot{\tilde{\z}} &= - \left [ F \, \tilde{\z} + A^\top \, \tilde{\lambda} + D \, (\Delta(t) - \hat{\Delta}) \right ]\\
    \dot{\tilde{\lambda}} &= [A \, \tilde{\z}]_{\tilde{\lambda} + \lambda(\hat{\Delta})}^+
\end{split}.
\end{equation}

We treat $\Delta(t) - \hat{\Delta}$ as an external input and denote
$u(t,\hat{\Delta}) := \Delta(t) - \hat{\Delta}$. Let $x := (\tilde{\z},\tilde{\lambda})$ denote the error trajectory. We suppress its dependence on $\hat{\Delta}$ when clear from context.

Our goal is to show that, for any $\hat{\Delta} \ge 0$ such that there exist associated $\z(\hat{\Delta})$ and $\lambda(\hat{\Delta})$ satisfying \eqref{eq:active_kkt_map}, the reduced system obtained by treating $\Delta(t)$ as an exogenous input satisfies
\begin{equation*}
\|x(t,\hat{\Delta})\| \le k_1 e^{-\alpha t/2} \|x(0,\hat{\Delta})\| + k_2 \|u(\cdot,\hat{\Delta})\|_\infty,
\end{equation*}
where $k_1>0$, $k_2>0$, and $\alpha>0$ are constants independent of $\hat{\Delta}$. Importantly, this bound holds with respect to any point satisfying \eqref{eq:active_kkt_map}; hence, it provides an error bound relative to any such point, rather than to a uniquely specified reference solution.

We first establish properties of the cross terms under the new dynamics.
\begin{lemma}[Projection inequalities]
Let $q := A \, \tilde{\z}$ and let $\dot{\tilde{\lambda}} = [q]^+_{\tilde{\lambda} + \lambda(\hat{\Delta})}$. Then, for almost all $t$, the following inequalities hold:
\begin{equation}
\tilde{\lambda}^\top \dot{\tilde{\lambda}} \le \tilde{\lambda}^\top q = \tilde{\lambda}^\top A \, \tilde{\z}.
\label{eq:proj_ineq1}
\end{equation}
\begin{equation}
q^\top \dot{\tilde{\lambda}} = \|\dot{\tilde{\lambda}}\|^2 \le \|q\|^2 = \|A \, \tilde{\z}\|^2.
\label{eq:proj_ineq2}
\end{equation}
\end{lemma}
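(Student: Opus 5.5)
The plan is a componentwise case analysis of the projection operator \eqref{equ:proj_oper}, applied with $a=q_i$ and $b=\tilde{\lambda}_i+\lambda_i(\hat{\Delta})=\lambda_i$. The only extra facts needed are the sign conditions at the reference point. First, $\lambda(\hat{\Delta})\ge 0$ by \eqref{eq:active_kkt_map}. Second, along trajectories of Algorithm~\ref{alg:main} the multiplier $\lambda(t)$ stays nonnegative, since it starts nonnegative and the projection keeps it there by Lemma~\ref{lem:alg_pds}. The qualifier ``for almost all $t$'' is there because the trajectory is only absolutely continuous, so $\dot{\tilde{\lambda}}$ exists a.e. Both inequalities are then checked pointwise at such $t$, one index $i$ at a time, and summed.

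For \eqref{eq:proj_ineq1}, I would show $\tilde{\lambda}_i\big([q_i]^+_{\lambda_i}-q_i\big)\le 0$ for each $i$, in two cases.
\begin{enumerate}[(i)]
\item If $\lambda_i>0$ or $q_i\ge 0$, then $[q_i]^+_{\lambda_i}=q_i$, and the term is zero.
\item If $\lambda_i=0$ and $q_i<0$, then $[q_i]^+_{\lambda_i}-q_i=-q_i>0$. Also $\tilde{\lambda}_i=\lambda_i-\lambda_i(\hat{\Delta})=-\lambda_i(\hat{\Delta})\le 0$, so the product is $\le 0$.
\end{enumerate}
Summing over $i$ gives $\tilde{\lambda}^\top\dot{\tilde{\lambda}}\le\tilde{\lambda}^\top q$. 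The equality $\tilde{\lambda}^\top q=\tilde{\lambda}^\top A\tilde{\z}$ is just the definition of $q$.

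For \eqref{eq:proj_ineq2}, I would use the fact that each component of $[q]^+_{\lambda}$ is either $q_i$ or $0$. In both cases $q_i[q_i]^+_{\lambda_i}=([q_i]^+_{\lambda_i})^2$, which gives $q^\top\dot{\tilde{\lambda}}=\|\dot{\tilde{\lambda}}\|^2$. Likewise $([q_i]^+_{\lambda_i})^2\le q_i^2$ in both cases, so $\|\dot{\tilde{\lambda}}\|^2\le\|q\|^2=\|A\tilde{\z}\|^2$.

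The only delicate point is in case (ii): the projection must be taken relative to the true multiplier $\lambda=\tilde{\lambda}+\lambda(\hat{\Delta})$, not relative to $\tilde{\lambda}$. This is why I need both $\lambda(t)\ge 0$ and $\lambda(\hat{\Delta})\ge 0$ to conclude $\tilde{\lambda}_i\le 0$ when $\lambda_i=0$. Everything else is elementary bookkeeping.
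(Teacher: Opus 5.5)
Your proposal is correct and matches the paper's proof: a componentwise case split on whether $\lambda_i=\tilde{\lambda}_i+\lambda_i(\hat{\Delta})$ is positive or zero, using $\lambda_i(\hat{\Delta})\ge 0$ to get $\tilde{\lambda}_i\le 0$ in the boundary case, then summing over $i$. You also state explicitly that $\lambda(t)\ge 0$ along trajectories, which is needed for the two cases to be exhaustive; the paper leaves this implicit.
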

\begin{proof}[Proof sketch]
Let $v:=\tilde{\lambda}+\lambda(\hat{\Delta})$ and $q:=A\,\tilde{\z}$. Since
$\dot{\tilde{\lambda}}=[q]^+_v$, the projection acts componentwise. If $v_i>0$, then $\dot{\tilde{\lambda}}_i=q_i$; if $v_i=0$, then $\dot{\tilde{\lambda}}_i=\max \{q_i,0 \}$ and $\tilde{\lambda}_i=-\lambda_i(\hat{\Delta})\le0$. Hence, componentwise,
$$
\tilde{\lambda}_i\dot{\tilde{\lambda}}_i\le \tilde{\lambda}_i q_i,
\qquad
q_i\dot{\tilde{\lambda}}_i=\dot{\tilde{\lambda}}_i^2\le q_i^2.
$$
Summing over $i$ gives \eqref{eq:proj_ineq1} and \eqref{eq:proj_ineq2}.
\end{proof}

We now choose a Lyapunov function for the error dynamics in (\ref{equ:error}).
Fix $\mu > 0$, to be specified later, and define
\begin{equation}\label{eq:Vdef}
V(\tilde{\z}, \tilde{\lambda}) :=
\frac12 \|\tilde{\z}\|^2 + \frac12 \|\tilde{\lambda}\|^2
+ \mu \, \tilde{\z}^\top A^\top \tilde{\lambda}.
\end{equation}

To ensure that $V$ is positive definite, let
$x := \begin{bmatrix} \tilde{\z} \\ \tilde{\lambda} \end{bmatrix}$ and define
\begin{equation*}
P :=
\begin{bmatrix}
I & \mu A^\top \\
\mu A & I
\end{bmatrix},
\qquad
V = \frac12 x^\top P x.
\end{equation*}

By the Schur complement, the matrix $P$ is positive definite if
\begin{equation}\label{eq:mu_PD}
I - \mu^2 A A^\top \succ 0,
\qquad \text{equivalently,} \qquad
\mu < \frac{1}{\|A\|}.
\end{equation}

Assume that (\ref{eq:mu_PD}) holds. Then there exist positive constants $c_1$ and $c_2$ such that
\begin{equation*}
c_1 \|x\|^2 \le V(x) \le c_2 \|x\|^2,
\end{equation*}
where $c_1 := \frac12 (1 - \mu \|A\|), \, c_2 := \frac12 (1 + \mu \|A\|)$.

%%%%%%%%%%%%%%%%%%%%%%%%%%%%%
%%%%%%%%%%%%%%%%%%%%%%%%%%%%%

\begin{lemma}
\label{lem:eiss_pd}
For any $\hat{\Delta}\in\mathcal{D}_{\mathrm{act}}$ and the
associated $(\z(\hat{\Delta}),\lambda(\hat{\Delta}))$ in \eqref{eq:active_kkt_map}, consider the error system \eqref{equ:error}, with $u(\cdot)\in L_\infty([0,\infty);\mathbb{R}^m)$. Under
Assumptions~\ref{ass:stronglymonotone},~\ref{ass:standard}, and~\ref{ass:exist_delta}, there exist constants
$k_1,k_2,\alpha>0$, independent of $\hat{\Delta}$, such that
\begin{equation*}
    \|x(t)\|
    \le
    k_1 e^{-\alpha t/2}\|x(0)\|+k_2\|u\|_\infty,
    \qquad \forall \,t\ge0.
\end{equation*}
\end{lemma}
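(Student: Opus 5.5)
We will differentiate the cross-term Lyapunov function $V$ in \eqref{eq:Vdef} along the error dynamics \eqref{equ:error}, show a dissipation inequality of the form $\dot V\le -\alpha V+\gamma\|u\|^2$ for almost all $t$, and then integrate it with the comparison lemma. The bounds $c_1\|x\|^2\le V\le c_2\|x\|^2$ then give $\|x(t)\|\le \sqrt{c_2/c_1}\,e^{-\alpha t/2}\|x(0)\|+\sqrt{\gamma/(\alpha c_1)}\,\|u\|_\infty$, which is the claimed estimate. All constants will depend only on $\delta$, $\|F\|$, $\|A\|$, $\|D\|$ and the smallest singular value $\underline{\sigma}(A)>0$ (from $\operatorname{rank}(A)=p$), hence not on $\hat{\Delta}$. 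Trajectories exist and are absolutely continuous by Lemma~\ref{lem:alg_pds}, so differentiating $V$ almost everywhere is legitimate.

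We compute $\dot V=\tilde{\z}^\top\dot{\tilde{\z}}+\tilde{\lambda}^\top\dot{\tilde{\lambda}}+\mu\,\dot{\tilde{\z}}^\top A^\top\tilde{\lambda}+\mu\,\tilde{\z}^\top A^\top\dot{\tilde{\lambda}}$ and treat the four terms as follows. The first term gives $-\tilde{\z}^\top F\tilde{\z}-\tilde{\z}^\top A^\top\tilde{\lambda}-\tilde{\z}^\top Du$; the first piece is at most $-\delta\|\tilde{\z}\|^2$ by Assumption~\ref{ass:stronglymonotone}. By \eqref{eq:proj_ineq1}, the second term is at most $\tilde{\lambda}^\top A\tilde{\z}$, which cancels the cross term from the first. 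The third term gives $-\mu\|A^\top\tilde{\lambda}\|^2-\mu\tilde{\z}^\top F^\top A^\top\tilde{\lambda}-\mu u^\top D^\top A^\top\tilde{\lambda}$. Here $\|A^\top\tilde{\lambda}\|^2\ge\underline{\sigma}(A)^2\|\tilde{\lambda}\|^2$ supplies the decay in $\tilde{\lambda}$, which is exactly the role of the $\mu$-cross term. By \eqref{eq:proj_ineq2}, the last term equals $\mu\|\dot{\tilde{\lambda}}\|^2\le\mu\|A\|^2\|\tilde{\z}\|^2$.

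Collecting terms, we obtain
\[
\dot V\le-(\delta-\mu\|A\|^2)\|\tilde{\z}\|^2-\mu\underline{\sigma}(A)^2\|\tilde{\lambda}\|^2+\mu\|F\|\|A\|\|\tilde{\z}\|\|\tilde{\lambda}\|+\|D\|\|\tilde{\z}\|\|u\|+\mu\|D\|\|A\|\|\tilde{\lambda}\|\|u\|.
\]
We apply Young's inequality to each mixed term:
\[
\mu\|F\|\|A\|\|\tilde{\z}\|\|\tilde{\lambda}\|\le\tfrac{\mu\underline{\sigma}^2}{2}\|\tilde{\lambda}\|^2+\tfrac{\mu\|F\|^2\|A\|^2}{2\underline{\sigma}^2}\|\tilde{\z}\|^2,
\]
and the two input terms are absorbed into $\tfrac{\delta}{4}\|\tilde{\z}\|^2$ and $\tfrac{\mu\underline{\sigma}^2}{4}\|\tilde{\lambda}\|^2$ plus a multiple of $\|u\|^2$. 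The $\tilde{\z}$ coefficient is then $-\tfrac{3\delta}{4}+\mu\big(\|A\|^2+\|F\|^2\|A\|^2/(2\underline{\sigma}^2)\big)$. Choosing $\mu$ small enough that this is at most $-\delta/4$, and also $\mu<1/\|A\|$ so that \eqref{eq:mu_PD} holds, yields
\[
\dot V\le-\kappa\|x\|^2+\gamma\|u\|^2,\qquad \kappa=\min\{\delta/4,\ \mu\underline{\sigma}^2/4\}.
\]
Therefore $\dot V\le-(\kappa/c_2)V+\gamma\|u\|_\infty^2$, and we set $\alpha=\kappa/c_2$.

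The main obstacle is the $\tilde{\lambda}$ decay. The projection only gives inequalities, and the sign of $\tilde{\lambda}$ is not controlled: $\lambda(\hat{\Delta})$ may be positive while $\lambda$ sits on the boundary. We must therefore check carefully that \eqref{eq:proj_ineq2} bounds $\mu\tilde{\z}^\top A^\top\dot{\tilde{\lambda}}$ correctly. That term equals $\mu\, q^\top\dot{\tilde{\lambda}}=\mu\|\dot{\tilde{\lambda}}\|^2\ge0$, which works against us, so the bound by $\mu\|A\|^2\|\tilde{\z}\|^2$ is essential. Its absorption requires $\mu$ small relative to $\delta/\|A\|^2$. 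A second subtlety is that $\|u\|_\infty$ need not be small, since only boundedness is used. Finally, the active-KKT equality $A\z(\hat{\Delta})=\mathbf{b}$ from Definition~\ref{def:Dact} is what makes the $\tilde{\lambda}$-dynamics driven purely by $A\tilde{\z}$ in \eqref{equ:error}, and we should verify that step when writing the error system.
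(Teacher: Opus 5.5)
Your proposal is correct and follows essentially the same route as the paper. It uses the same cross-term Lyapunov function. It applies the two projection inequalities in the same way: one cancels the $A$-cross term, and the other gives the bound $\mu\|\dot{\tilde{\lambda}}\|^2\le\mu\|A\|^2\|\tilde{\z}\|^2$. It draws the $\tilde{\lambda}$-decay from $-\mu\|A^\top\tilde{\lambda}\|^2$ via full row rank, then uses Young's inequality and integrates with the comparison lemma. The only difference is cosmetic: you charge the $\mu\,\tilde{\z}^\top F^\top A^\top\tilde{\lambda}$ cross term to the $\tilde{\lambda}$-dissipation, whereas the paper charges $\delta/4$ of it to $\tilde{\z}$, and this only changes the admissible upper bound on $\mu$.
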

\begin{proof}
Define $x:=(\tilde{\z},\tilde{\lambda})$ and use the shifted Lyapunov function in \eqref{eq:Vdef}:
\begin{equation*}
V(\tilde{\z},\tilde{\lambda})
:=
\frac12\|\tilde{\z}\|_2^2+\frac12\|\tilde{\lambda}\|_2^2+\mu\,\tilde{\z}^\top A^\top\tilde{\lambda}.
\end{equation*}
If $\mu<1/\|A\|$, then $P=\begin{bmatrix}I&\mu A^\top\\ \mu A&I\end{bmatrix}\succ 0$ and
\begin{equation}\label{eq:V_bounds_shift_user}
c_1\|x\|_2^2\le V(x)\le c_2\|x\|_2^2,
\end{equation}
where $c_1:=\tfrac12(1-\mu\|A\|)$, $c_2:=\tfrac12(1+\mu\|A\|)$.

\smallskip
\noindent
Differentiate $V$:
\begin{equation}\label{eq:Vdot_start_shift_user}
\dot V
=
\tilde{\z}^\top\dot{\tilde{\z}}
+\tilde{\lambda}^\top\dot{\tilde{\lambda}}
+\mu\Big(\dot{\tilde{\z}}^\top A^\top\tilde{\lambda}+\tilde{\z}^\top A^\top\dot{\tilde{\lambda}}\Big).
\end{equation}
Substitute the error dynamics $\dot{\tilde{\z}}=-(F\tilde{\z}+A^\top\tilde{\lambda}+Du)$:
\begin{equation}\label{eq:z_dot_term_shift_user}
\tilde{\z}^\top\dot{\tilde{\z}}
=
-\tilde{\z}^\top F\tilde{\z}
-\tilde{\z}^\top A^\top\tilde{\lambda}
-\tilde{\z}^\top Du.
\end{equation}
Using the projection inequality \eqref{eq:proj_ineq1}, we have
\begin{equation}\label{eq:lambda_dot_term_shift_user}
\tilde{\lambda}^\top\dot{\tilde{\lambda}}
\le
\tilde{\lambda}^\top A\tilde{\z}.
\end{equation}
Hence the $A$--cross term cancels:
\begin{equation}\label{eq:cancel_shift_user}
-\tilde{\z}^\top A^\top\tilde{\lambda}+\tilde{\lambda}^\top\dot{\tilde{\lambda}}
\stackrel{a}{\le}
-\tilde{\z}^\top A^\top\tilde{\lambda}+\tilde{\lambda}^\top A\tilde{\z}
=0,
\end{equation}
where step~(a) uses \eqref{eq:lambda_dot_term_shift_user}.

\smallskip
\noindent
Next expand the two $\mu$-terms. First,
\begin{equation}\label{eq:mu_term1_shift_user}
\begin{split}
\dot{\tilde{\z}}^\top A^\top\tilde{\lambda}
&=-(F\tilde{\z}+A^\top\tilde{\lambda}+Du)^\top A^\top\tilde{\lambda}\\
&=
-\tilde{\z}^\top F^\top A^\top\tilde{\lambda}
-\tilde{\lambda}^\top AA^\top\tilde{\lambda}
-u^\top D^\top A^\top\tilde{\lambda}.
\end{split}
\end{equation}
Second, by \eqref{eq:proj_ineq2},
\begin{equation}\label{eq:mu_term2_shift_user}
\tilde{\z}^\top A^\top\dot{\tilde{\lambda}}
=(A\tilde{\z})^\top\dot{\tilde{\lambda}}
\le \|A\tilde{\z}\|_2^2
\le \|A\|^2\|\tilde{\z}\|_2^2.
\end{equation}

\smallskip
\noindent
Plug \eqref{eq:z_dot_term_shift_user}, \eqref{eq:cancel_shift_user},
\eqref{eq:mu_term1_shift_user}, and \eqref{eq:mu_term2_shift_user}
into \eqref{eq:Vdot_start_shift_user} to obtain
\begin{equation}\label{eq:Vdot_prebound_shift_user}
\begin{split}
\dot V
\le\;&
-\tilde{\z}^\top F\tilde{\z}
-\mu\,\tilde{\lambda}^\top AA^\top\tilde{\lambda}
+\mu\|A\|^2\|\tilde{\z}\|_2^2\\
&-\mu\,\tilde{\z}^\top F^\top A^\top\tilde{\lambda}
-\tilde{\z}^\top Du
-\mu\,u^\top D^\top A^\top\tilde{\lambda}.
\end{split}
\end{equation}

\smallskip
\noindent
Since the game is strongly monotone and $A$ has full row rank, let
$\sigma:=\lambda_{\min}(AA^\top)>0$. Then
\begin{equation}\label{eq:quad_bounds_shift_user}
\tilde{\z}^\top F\tilde{\z}\ge \delta\|\tilde{\z}\|_2^2,
\qquad
\tilde{\lambda}^\top AA^\top\tilde{\lambda}\ge \sigma\|\tilde{\lambda}\|_2^2.
\end{equation}
For the cross term, by $\|F^\top A^\top\|\le \|F\|\,\|A\|$ and Young,
\begin{equation}\label{eq:cross_bound_shift_user}
\mu\big\|\tilde{\z}^\top F^\top A^\top\tilde{\lambda}\big\|
\le
\frac{\delta}{4}\|\tilde{\z}\|_2^2
+\frac{\mu^2\|F\|^2\|A\|^2}{\delta}\|\tilde{\lambda}\|_2^2.
\end{equation}
Substitute \eqref{eq:quad_bounds_shift_user} and \eqref{eq:cross_bound_shift_user} into \eqref{eq:Vdot_prebound_shift_user}:
\begin{equation}\label{eq:dissipation_shift_user}
\begin{split}
\dot V
\le\;&
-\Big(\frac{3\delta}{4}-\mu\|A\|^2\Big)\|\tilde{\z}\|_2^2
-\Big(\mu\sigma-\frac{\mu^2\|F\|^2\|A\|^2}{\delta}\Big)\|\tilde{\lambda}\|_2^2\\
&-\tilde{\z}^\top Du
-\mu\,u^\top D^\top A^\top\tilde{\lambda}.
\end{split}
\end{equation}
Define
\begin{equation*}
a_z:=\frac{3\delta}{4}-\mu\|A\|^2,
\qquad
a_\lambda:=\mu\sigma-\frac{\mu^2\|F\|^2\|A\|^2}{\delta}.
\end{equation*}
Choose $\mu>0$ such that
\begin{equation*}
    \mu < \min \left\{
    \frac{1}{\|A\|},
    \frac{3\delta}{4\|A\|^2},
    \frac{\delta \sigma}{2\|F\|^2\|A\|^2}
    \right\}.
\end{equation*}
Then $a_z>0$, $a_\lambda>0$.

\smallskip
\noindent
We next bound the input terms. Apply Young with $a_z$ and $a_\lambda$:
\begin{equation}\label{eq:input1_shift_user}
\|\tilde{\z}^\top D \, u \|
\le
\frac{a_z}{2}\|\tilde{\z}\|_2^2+\frac{\|D\|^2}{2a_z}\|u\|_2^2,
\end{equation}
\begin{equation}\label{eq:input2_shift_user}
\mu \, \|u^\top D^\top A^\top\tilde{\lambda} \|
\le
\frac{a_\lambda}{2}\|\tilde{\lambda}\|_2^2+\frac{\mu^2\|AD\|^2}{2a_\lambda}\|u\|_2^2.
\end{equation}
Combine \eqref{eq:dissipation_shift_user} with \eqref{eq:input1_shift_user}--\eqref{eq:input2_shift_user} and absorb the
$\frac{a_z}{2}\|\tilde{\z}\|^2$ and $\frac{a_\lambda}{2}\|\tilde{\lambda}\|^2$ terms into the dissipation:
\begin{equation*}
\dot V
\le
-\frac{a_z}{2}\|\tilde{\z}\|_2^2
-\frac{a_\lambda}{2}\|\tilde{\lambda}\|_2^2
+\Gamma\|u\|_2^2,
\end{equation*}
where
\begin{equation*}
\Gamma:=\frac{\|D\|^2}{2a_z}+\frac{\mu^2\|AD\|^2}{2a_\lambda}.
\end{equation*}
Let $k:=\frac12\min\{a_z,a_\lambda\}$. Then
\begin{equation*}
\dot V \le -k\|x\|_2^2+\Gamma\|u\|_2^2.
\end{equation*}
Using $V\le c_2\|x\|_2^2$ from \eqref{eq:V_bounds_shift_user}, we obtain $\|x\|_2^2\ge V/c_2$ and hence
\begin{equation}\label{eq:Vdot_alpha_shift_user}
\dot V \le -\alpha V+\Gamma\|u\|_2^2,
\qquad
\alpha:=\frac{k}{c_2},
\end{equation}
almost everywhere.

\smallskip
\noindent
Integrate \eqref{eq:Vdot_alpha_shift_user}:
\begin{equation*}
V(t)
\le
e^{-\alpha t}V(0)+\frac{\Gamma}{\alpha}\|u\|_\infty^2,
\qquad \forall \, t\ge 0.
\end{equation*}
Using \eqref{eq:V_bounds_shift_user}, i.e., $V(t)\ge c_1\|x(t)\|_2^2$ and $V(0)\le c_2\|x(0)\|_2^2$, we obtain
\begin{equation*}
\|x(t)\|_2
\le
\sqrt{\frac{c_2}{c_1}}\,e^{-\alpha t/2}\|x(0)\|_2
+\sqrt{\frac{\Gamma}{\alpha c_1}}\,\|u\|_\infty,
\qquad \forall \, t\ge 0.
\end{equation*}
\end{proof}
%%%%%%%%%%%%%%%%%%%%%%%%%%%%%

%%%%%%%%%%%%%%%%%%%%%%%%%%%%%%%%
With Lemma~\ref{lem:eiss_pd} providing a uniform bound on the trajectory around any active KKT solution as a function of $\hat{\Delta}$, we now state the main theorem via an ISS-like argument.
\begin{theorem}
\label{thm:main}
Let $(\z(t),\lambda(t),\Delta(t))$ be a trajectory of \eqref{equ:whole_system} from any initial condition $y_0$, and let $\bar{\Delta}_{[y_0]}$ be the limit given by Lemma~\ref{lem:det_const}. If
$\bar{\Delta}_{[y_0]}\in\mathcal{D}_{\mathrm{act}}$, then
$$
(\z(t),\lambda(t))
\to
(\z(\bar{\Delta}_{[y_0]}),\lambda(\bar{\Delta}_{[y_0]})),
$$
and $\z(\bar{\Delta}_{[y_0]})$ is the GNE associated with $\bar{\Delta}_{[y_0]}$.
\end{theorem}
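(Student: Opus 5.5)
We will use a standard ``converging-input, converging-state'' argument built on Lemma~\ref{lem:eiss_pd}. Set $\hat{\Delta}:=\bar{\Delta}_{[y_0]}$, which lies in $\mathcal{D}_{\mathrm{act}}$ by hypothesis, and fix an associated pair $(\z(\hat{\Delta}),\lambda(\hat{\Delta}))$ satisfying \eqref{eq:active_kkt_map}. The error $x(t)=(\z(t)-\z(\hat{\Delta}),\lambda(t)-\lambda(\hat{\Delta}))$ then obeys \eqref{equ:error} with input $u(t)=\Delta(t)-\hat{\Delta}$. By Lemma~\ref{lem:det_const}, $u(t)\to 0$. Since each $\Delta^i$ is nonincreasing from $\Delta^i(0)$ to $\hat{\Delta}^i$, we have $0\le u(t)\le \Delta(0)-\hat{\Delta}$, so $u\in L_\infty$.

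\textbf{Step 1 (boundedness).} Lemma~\ref{lem:eiss_pd} applied on $[0,\infty)$ gives $\|x(t)\|\le k_1\|x(0)\|+k_2\|u\|_\infty$. Hence the trajectory is bounded.

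\textbf{Step 2 (tail restart).} The error system is time invariant apart from the input, and $\lambda(\hat{\Delta})$ is fixed, so Lemma~\ref{lem:eiss_pd} also holds from any initial time $T$. Its constants are independent of the starting state, and the proof only uses the dissipation inequality $\dot V\le-\alpha V+\Gamma\|u\|^2$ almost everywhere along the solution. This yields, for $t\ge T$,
\[
\|x(t)\|\le k_1e^{-\alpha(t-T)/2}\|x(T)\|+k_2\sup_{s\ge T}\|u(s)\|.
\]
Given $\varepsilon>0$, choose $T$ so that $k_2\sup_{s\ge T}\|u(s)\|<\varepsilon/2$. Step~1 bounds $\|x(T)\|$ uniformly, so we can then choose $t$ large enough that the first term is below $\varepsilon/2$. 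Therefore $x(t)\to 0$, which gives $(\z(t),\lambda(t))\to(\z(\hat{\Delta}),\lambda(\hat{\Delta}))$.

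Two side remarks follow from this limit. First, because the trajectory converges to this particular pair, the pair is unique: two candidate pairs would both be limits, so they coincide. Second, $\z(\hat{\Delta})$ is determined by strong monotonicity, and $\lambda(\hat{\Delta})$ is determined by the full row rank of $A$.

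\textbf{Step 3 (GNE identification).} For each agent $v$ and each sensed constraint $i$, the multiplier copy satisfies $\lambda_v^i(t)=\lambda^i(t)+\Delta_v^i(t)$. This converges to $\lambda_v^{i\star}:=\lambda^i(\hat{\Delta})+\hat{\Delta}_v^i\ge 0$. Because the stacking defining $D$ reproduces each agent's $A(m_v,n_v)^\top\lambda_v$, the first line of \eqref{eq:active_kkt_map} is exactly $\nabla_{z_v}L_v(\z(\hat{\Delta}),\lambda_v^\star)=0$ for every $v$. The second line gives $g(\z(\hat{\Delta}))=0$, so the point is feasible and complementarity holds trivially, since every constraint is active. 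Thus \eqref{equ:kkt_gnep} holds, and by convexity and LICQ (Assumption~\ref{ass:standard}) together with the KKT characterization \cite{facchinei_generalized_2010}, $\z(\hat{\Delta})$ is a GNE.

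\textbf{Main obstacle.} The delicate point is Step~2, namely justifying the restart of Lemma~\ref{lem:eiss_pd} at time $T$ for a projected system with a merely measurable input. We would handle it directly: integrate $\dot V\le-\alpha V+\Gamma\|u\|^2$ from $T$ using the absolute continuity of $V$ along the Carath\'eodory solution guaranteed by Lemma~\ref{lem:alg_pds}. A second check is that the projection inequalities remain valid with the true trajectory, which requires $\tilde\lambda+\lambda(\hat{\Delta})=\lambda(t)\ge0$; this holds because $\lambda(t)$ stays in the nonnegative orthant.
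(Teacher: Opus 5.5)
Your proposal is correct and follows the paper's proof. You set $\hat{\Delta}=\bar{\Delta}_{[y_0]}$, treat $u=\Delta-\hat{\Delta}\to 0$ as a vanishing input, restart the estimate of Lemma~\ref{lem:eiss_pd} at a late time $\tau$ so that $\sup_{s\ge\tau}\|u(s)\|$ is small, let the exponential term decay, and read off the KKT conditions from \eqref{eq:active_kkt_map}. Your additions fill in details the paper leaves implicit: you justify the restart through the almost-everywhere dissipation inequality, and you reconstruct $\lambda_v^{i\star}=\lambda^i(\hat{\Delta})+\hat{\Delta}_v^i$ explicitly, so stationarity holds blockwise and complementarity is trivial because every constraint is active.
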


\begin{proof}
Let $\bar{\Delta}_{[y_0]}$ be the limit given by Lemma~\ref{lem:det_const}, and assume
$\bar{\Delta}_{[y_0]}\in\mathcal{D}_{\mathrm{act}}$. Choose
$\hat{\Delta}=\bar{\Delta}_{[y_0]}$ and define
\begin{equation*}
u(t,\hat{\Delta}):=\Delta(t)-\bar{\Delta}_{[y_0]}.
\end{equation*}
Then $u(t,\hat{\Delta})\to0$ as $t\to\infty$.

By the ISS-type estimate established in Lemma~\ref{lem:eiss_pd}, for any $\tau\ge0$ and $t\ge\tau$,
\begin{equation*}
\begin{split}
    &\|x(t,\hat{\Delta})\|
    \le
    k_1 e^{-\alpha (t-\tau)/2}\|x(\tau,\hat{\Delta})\|
    +
    k_2\|u(\cdot,\hat{\Delta})\|_{\infty,[\tau,\infty)}.
\end{split}
\end{equation*}
Since $u(t,\hat{\Delta})\to0$, the second term can be made arbitrarily small by choosing $\tau$ sufficiently large, while for such fixed $\tau$ the first term vanishes as $t\to\infty$. Hence,
\begin{equation*}
x(t,\hat{\Delta})\to0,
\end{equation*}
and consequently
\begin{equation*}
    (\z(t),\lambda(t))
    \to
    \left (\z(\bar{\Delta}_{[y_0]}),\lambda(\bar{\Delta}_{[y_0]}) \right).
\end{equation*}
By \eqref{eq:active_kkt_map}, together with the multiplier differences represented by $\bar{\Delta}_{[y_0]}$, the limiting point satisfies the KKT conditions in~\eqref{equ:kkt_gnep}. Hence, $\z(\bar{\Delta}_{[y_0]})$ is the GNE associated with $\bar{\Delta}_{[y_0]}$.
\end{proof}

\begin{remark}
Theorem~\ref{thm:main} provides a sufficient condition for convergence under Assumption~\ref{ass:exist_delta} and $\bar{\Delta}_{[y_0]}\in\mathcal{D}_{\mathrm{act}}$. If $\bar{\Delta}_{[y_0]}\notin\mathcal{D}_{\mathrm{act}}$, the theorem does not establish convergence to a GNE, although the preceding boundedness result remains valid. Removing Assumption~\ref{ass:exist_delta} and relaxing the sufficient condition $\bar{\Delta}_{[y_0]}\in\mathcal{D}_{\mathrm{act}}$ are left for future work.
\end{remark}

%%%%%%%%%%%%%%%%%%%%%%%%%%%%%%%%%%%%%%%%%%%%%%%%%%%%%%%%%%%%%%%%%%%%%%%%%%%%%%%%

\section{Simulation}
In the multi-agent sensing placement problem, each agent $v$ tracks its target $T_v$ while staying close to others for communication. The cost is
\begin{equation*}
    f_v(z_v, \z_{-v}) = \| z_v - T_v\|^2 + \rho_v \sum_{u \neq v} \| z_v - z_u\|^2,
\end{equation*}
with decision variable $z_v \in \R^2$ denoting the position of agent $v$.  
The space is partitioned into half-spaces, and to ensure coverage, the group center must remain on the same side of the boundary as the service region. For group $\mathcal{G}$ in half-space $-n_\mathcal{G}^\top \mathbf{p} - c_\mathcal{G} \le 0$,
\begin{equation*}
    g^\mathcal{G}(\z) = -n_\mathcal{G}^\top \left( \tfrac{1}{|\mathcal{G}|} \sum_{u \in \mathcal{G}} z_u \right) - c_\mathcal{G}.
\end{equation*}
Figure~\ref{fig:setup} shows the outcomes of Algorithm~\ref{alg:main} from five initializations. Ten targets (squares) are assigned to agents grouped into three categories, each constrained to keep its center within a half-plane. The figure highlights the non-uniqueness of the GNE and shows that Algorithm~\ref{alg:main} converges to different equilibria depending on initialization. The source code used to reproduce the numerical experiments is publicly
available in the accompanying repository~\cite{yin_distributedgne_code}.
\begin{figure}[H]
\centering
\includegraphics[width=0.9\columnwidth]{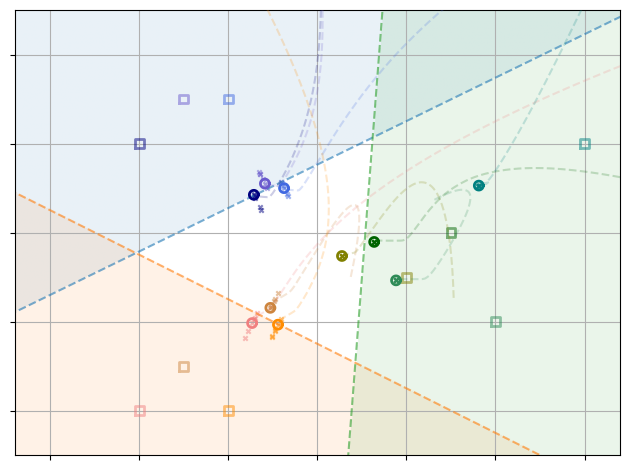} % Reduce the figure size so that it is slightly narrower than the column. Don't use precise values for figure width.This setup will avoid overfull boxes.
    \caption{Sensing placement problem setup and results of Algorithm~\ref{alg:main} under five initial conditions. Colored regions show the half-planes for each group. Dotted lines trace the trajectories from different initializations. Circles mark the converged generalized Nash equilibria (GNE), while crosses ("x") indicate distinct GNE solutions from various starts.}
\label{fig:setup}
\end{figure}
%%%%%%%%%%%%%%%%%%%%%%%%%%%%%%%%%%%%%%%%%%%%%%%%%%%%%%%%%%%%%%%%%%%%%%%%%%%%%%%%

\section{Conclusion}
In this paper, we propose a continuous-time algorithm for quadratic GNEPs that avoids sharing Lagrange multipliers and establish convergence under the sufficient condition
$\bar{\Delta}_{[y_0]} \in \mathcal{D}_{\mathrm{act}}$.
Empirically, the algorithm converges even when this condition is not satisfied. Removing this sufficient condition from the theoretical analysis and extending the framework to general convex cost functions are left for future work.
%%%%%%%%%%%%%%%%%%%%%%%%%%%%%%%%%%%%%%%%%%%%%%%%%%%%%%%%%%%%%%%%%%%%%%%%%%%%%%%%

\addtolength{\textheight}{-12cm}   % This command serves to balance the column lengths
                                  % on the last page of the document manually. It shortens
                                  % the textheight of the last page by a suitable amount.
                                  % This command does not take effect until the next page
                                  % so it should come on the page before the last. Make
                                  % sure that you do not shorten the textheight too much.

%%%%%%%%%%%%%%%%%%%%%%%%%%%%%%%%%%%%%%%%%%%%%%%%%%%%%%%%%%%%%%%%%%%%%%%%%%%%%%%%

%%%%%%%%%%%%%%%%%%%%%%%%%%%%%%%%%%%%%%%%%%%%%%%%%%%%%%%%%%%%%%%%%%%%%%%%%%%%%%%%

%%%%%%%%%%%%%%%%%%%%%%%%%%%%%%%%%%%%%%%%%%%%%%%%%%%%%%%%%%%%%%%%%%%%%%%%%%%%%%%%

%%%%%%%%%%%%%%%%%%%%%%%%%%%%%%%%%%%%%%%%%%%%%%%%%%%%%%%%%%%%%%%%%%%%%%%%%%%%%%%%

\bibliography{IEEEexample}
\bibliographystyle{IEEEtran}

\end{document}